\newcommand{\Cay}{\mathrm{Cay}}
\newtheorem{theorem}{Theorem}[section]
\newtheorem{lemma}[theorem]{Lemma}
\newtheorem{cor}[theorem]{Corollary}
\newtheorem{example}{Example}
\theoremstyle{definition}
\numberwithin{equation}{section} 
\def\qed{\hfill$\Box$\vspace{12pt}}
\long\def\delete#1{}
\begin{document}
\title{Fractional revival on Cayley graphs over abelian groups}
\author{Jing Wang$^{a,b,c}$,~Ligong Wang$^{a,b,c}$$^,$\thanks{Supported by the National Natural Science Foundation of China (Nos. 11871398 and 12271439).}~,~Xiaogang Liu$^{a,b,c,}$\thanks{Supported by the National Natural Science Foundation of China (No. 12371358) and the Guangdong Basic and Applied Basic Research Foundation (No. 2023A1515010986).}~$^,$\thanks{Corresponding author. Email addresses: wj66@mail.nwpu.edu.cn, lgwangmath@163.com, xiaogliu@nwpu.edu.cn}
\\[2mm]
{\small $^a$School of Mathematics and Statistics,}\\[-0.8ex]
{\small Northwestern Polytechnical University, Xi'an, Shaanxi 710072, P.R.~China}\\
{\small $^b$Research \& Development Institute of Northwestern Polytechnical University in Shenzhen,}\\[-0.8ex]
{\small Shenzhen, Guangdong 518063, P.R. China}\\
{\small $^c$Xi'an-Budapest Joint Research Center for Combinatorics,}\\[-0.8ex]
{\small Northwestern Polytechnical University, Xi'an, Shaanxi 710129, P.R. China}
}
\date{}

\openup 0.5\jot
\maketitle

\begin{abstract}
In this paper, we investigate the existence of fractional revival on Cayley graphs over finite abelian groups. We give a necessary and sufficient condition for Cayley graphs over finite abelian groups to have fractional revival. As applications, the existence of fractional revival on circulant graphs and cubelike graphs are characterized.
\smallskip

\emph{Keywords:} Fractional revival; Cayley graph; Circulant graph; Cubelike graph.

\emph{Mathematics Subject Classification (2010):} 05C50, 81P68
\end{abstract}

\section{Introduction}
  Quantum state transfer in quantum networks first introduced by Bose in \cite{Bose03} is a very important research content for quantum communication protocols. Let $G=(V(G), E(G))$ be a graph with the adjacency matrix $A_G$. The \emph{transition matrix} \cite{FarhiG98} of $G$ with respect to $A_G$ is defined by
$$
H_{A_{G}}(t) = \exp(\mathrm{i}tA_{G})=\sum_{k=0}^{\infty}\frac{\mathrm{i}^{k} A^{k}_{G} t^{k}}{k!}, ~ t \in \mathbb{R},~\mathrm{i}=\sqrt{-1}.
$$
Let $\mathbf{e}_u$ denote the standard basis vector in $\mathbb{C}^{|G|}$ indexed by the vertex $u$, where $|G|$ denotes the number of vertices of $G$. If $u$ and $v$ are distinct vertices in $G$ and there is a time $t$ such that
\begin{equation}\label{FRCAY-EQUATION1}
H_{A_{G}}(t)\mathbf{e}_u=\gamma\mathbf{e}_v,
\end{equation}
where $\gamma$ is a complex number and $|\gamma|=1$, then we say that $G$ has \emph{perfect state transfer} (PST for short) from $u$ to $v$ at time $t$. In particular, if $u=v$ in Equation (\ref{FRCAY-EQUATION1}), then we say $G$ is \emph{periodic} at vertex $u$ at time $t$. Let $H_{A_G}(t)_{u,v}$ denote the $(u,v)$-entry of $H_{A_G}(t)$, where $u,v\in V(G)$. It is easy to see that Equation (\ref{FRCAY-EQUATION1}) is equivalent to
\begin{equation*}\label{FRCAY-EQUATION2}
|H_{A_G}(t)_{u,v}|=1.
\end{equation*}

To characterize which graphs having PST has attracted the attention of more and more researchers. Up until now, many wonderful results have been achieved on  diverse families of graphs, including trees \cite{Bose03, CoutinhoL2015, Fan, GodsilKSS12}, Cayley graphs \cite{Basic11, Basic09, CaoCL20, CaoF21, CaoWF20, CC, LiLZZ21, HPal3, HPal, HPal5, Tan19, Tm19}, distance regular graphs \cite{Coutinho15} and some graph operations such as NEPS \cite{chris1, chris2,  LiLZZ21, HPal1, HPal4, SZ}, coronas \cite{AckBCMT16}, joins \cite{Angeles10} and total graphs \cite{LiuW2021}. For more information, we refer the reader to \cite{Coh14, Coh19, CGodsil, Godsil12, LiuZ2022, HZ, Zhou14}. However, the graphs having PST are still quite rare. In \cite[Corollary~6.2]{Godsil12}, Godsil has shown that there are at most finitely many connected graphs with a given maximum valency where PST occurs.

Recently, another physical phenomenon on quantum state transfers, the \emph{fractional revival} (FR for short), was introduced in \cite{RS15}, which is a generalization of PST. We say a graph $G$ has \emph{fractional revival} at time $t$ from $u$ to $v$, if
\begin{equation}\label{FRCAY-EQUATION3}
H_{A_{G}}(t)\mathbf{e}_u=\alpha\mathbf{e}_u+\beta\mathbf{e}_v,
\end{equation}
where $\alpha, \beta$ are complex numbers and $|\alpha|^2+|\beta|^2=1$. In particular, if $\alpha=0$, then $G$ has PST from $u$ to $v$ at time $t$; and if $\beta=0$, then $G$ is periodic at vertex $u$ at time $t$. It is easy to verify that Equation (\ref{FRCAY-EQUATION3}) is equivalent to
 \begin{equation*}\label{FRCAY-EQUATION434}
 |H_{A_G}(t)_{u,u}|^2+|H_{A_G}(t)_{u,v}|^2=1.
 \end{equation*}
It was shown in \cite{CCTVZ19} that if a graph $G$ has FR from $u$ to $v$, then $G$ has FR from $v$ to $u$ at the same time. Thus, we simply say $G$ has FR between $u$ and $v$ at time $t$.
Up until now, only few results on FR has been given, which are listed as follows:
\begin{itemize}
  \item A systematic study of FR at two sites in XX quantum spin chains was given in \cite{GenestVZ16}.
  \item An example of a graph that has balanced FR between antipodes was given in \cite{BernardCLTV18}.
  \item A cycle has FR if and only if it has four or six vertices, and a path has FR if and only it has two, three or four vertices \cite{CCTVZ19}.
  \item FR in graphs, whose adjacency matrices belong to the Bose--Mesner algebra of association schemes, was given in \cite{ChanCCTVZ20}.
  \item An indication on how FR can be swapped with PST by modifying chains through isospectral deformations was given in \cite{SVZ22}.
 \end{itemize}

Let $\Gamma$ be a finite abelian group. It is well-known that $\Gamma$ can be decomposed as a direct sum of cyclic groups:
\begin{equation}\label{GammaDef-1}
\Gamma=\mathbb{Z}_{n_1}\oplus \mathbb{Z}_{n_2}\oplus\cdots\oplus \mathbb{Z}_{n_r}~(n_s\geq2),
\end{equation}
where $\mathbb{Z}_m=(\mathbb{Z}/m\mathbb{Z},+)=\{0,1,2,\ldots,m-1\}$ is the additive group of integers modulo $m$. Let $S$ be a subset of $\Gamma$ such that $0\notin S$ and $-S=S$. The \emph{Cayley graph} over $\Gamma$ with the connection set $S$, denoted by $\Cay(\Gamma,S)$, is defined to be the graph whose vertices are the elements of $\Gamma$ and two vertices $u,v\in \Gamma$ are adjacent if and only if $u-v\in S$.
Note that $0\notin S$ means that $\Cay(\Gamma,S)$ has no loop and $-S=S$ means that $\Cay(\Gamma,S)$ is an undirected graph.

In this paper, our purpose is to find the existence of FR on Cayley graph $\Cay(\Gamma,S)$. With a slight abuse of notation, for $x=(x_1, x_2,\ldots, x_r)\in \Gamma$ and $y=(y_1,y_2,\ldots, y_r)\in \Gamma$, let $x\dot>y$ denote that there exists an $i\in\{1,2,\ldots,r\}$ such that $x_1=y_1,\ldots, x_{i-1}=y_{i-1}$ and $x_i>y_i$ (Here, we regard $x_i$ and $y_i$ as integers). Hence any two distinct elements $x$ and $y$ are comparable with this operation. Let
$$\mathrm{wt}(x)=\sum_{s=1}^r x_s, ~\mathrm{wt}(xy)=\sum_{s=1}^r x_sy_s \text{~~and~~} \mathrm{wt}\left(\frac{x}{y}\right)=\sum\limits_{s=1}^{r}\frac{x_s}{y_s}$$
denote the sum of all the elements of $x=(x_1, x_2,\ldots, x_r)$,  $xy=(x_1y_1, x_2y_2,\ldots, x_ry_r)$ and $\frac{x}{y}=(\frac{x_1}{y_1},\frac{x_2}{y_2},\ldots, \frac{x_r}{y_r})$, respectively.

Our result is stated as follows, which gives a necessary and sufficient condition for $\Cay(\Gamma,S)$ to have FR.

\begin{theorem}\label{FRCAY-mian result}
Let $\Gamma=\mathbb{Z}_{n_1}\oplus \mathbb{Z}_{n_2}\oplus\cdots \oplus \mathbb{Z}_{n_r}$ be a finite abelian group of order $|\Gamma|=n_1n_2\cdots n_r$ as shown in (\ref{GammaDef-1}), and $S$ a subset of $\Gamma$ such that $0\notin S$ and $-S=S$. Let $n=(n_1,n_2,\ldots, n_r)$ and
$$
N=\left\{(x,y)\left|~x,y\in \Gamma,~ x\dot>y,~ \mathrm{wt}\left(\frac{2a(x-y)}{n}\right)\text{~is~even}\right.\right\}.
$$
Then $\Cay(\Gamma, S)$ has FR at time $t$ between vertices $v$ and $v+a$ if and only if the following three conditions hold:
\begin{itemize}
\item[\rm(a)] $a=(a_1, a_2,\ldots, a_r)$ is of order two;
\item[\rm(b)] $n_s$ is even for $a_s\neq 0$, $s=1,2,\ldots, r$;
\item[\rm(c)] $\frac{t}{2\pi}(\lambda_x-\lambda_y)\in \mathbb{Z}, \text{~for~all~} (x,y)\in N$, where $\lambda_x=\sum\limits_{g\in S}\prod\limits_{s=1}^re^{\frac{2\pi \mathrm{i}}{n_s}x_sg_s}$.
\end{itemize}
Moreover, if these conditions hold, and $\lambda_x-\lambda_y$ are integers for all $(x,y)\in N$, then there is a minimum time $t=\frac{2\pi}{M}$ at which FR occurs between $u$ and $v$, where
$$M=\gcd(\{\lambda_x-\lambda_y\}_{(x,y)\in N}).$$
\end{theorem}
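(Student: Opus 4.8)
The plan is to diagonalise $A_G$ by the characters of $\Gamma$ and convert the vector equation defining FR into one scalar equation per character. Since $\Cay(\Gamma,S)$ is vertex-transitive we may translate and assume $v=0$, so that FR between $v$ and $v+a$ at time $t$ means $H_{A_G}(t)\mathbf{e}_0=\alpha\mathbf{e}_0+\beta\mathbf{e}_a$ for some $\alpha,\beta$ with $\beta\ne 0$ (the identity $|\alpha|^2+|\beta|^2=1$ being automatic, as $H_{A_G}(t)$ is unitary). For $x=(x_1,\dots,x_r)\in\Gamma$ let $\chi_x$ be the character $g\mapsto\prod_{s}e^{2\pi\mi x_sg_s/n_s}$; these form an orthogonal basis of $\mathbb{C}^{\Gamma}$ made of eigenvectors of $A_G$, with $A_G\chi_x=\lambda_x\chi_x$ for $\lambda_x$ as in the statement, and $\lambda_{-x}=\lambda_x$ because $-S=S$. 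Expanding $\mathbf{e}_0=\tfrac1{|\Gamma|}\sum_x\chi_x$ and $\mathbf{e}_a=\tfrac1{|\Gamma|}\sum_x\overline{\chi_x(a)}\,\chi_x$, applying $H_{A_G}(t)$ and comparing coefficients, the FR condition becomes
\begin{equation*}
e^{\mi t\lambda_x}=\alpha+\beta\,\overline{\chi_x(a)}\qquad\text{for all }x\in\Gamma .
\end{equation*}

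Next I would extract conditions (a) and (b). Writing the displayed identity for $x$ and for $-x$ and using $\lambda_{-x}=\lambda_x$ together with $\overline{\chi_{-x}(a)}=\chi_x(a)$ gives $\beta\bigl(\chi_x(a)-\overline{\chi_x(a)}\bigr)=0$; as $\beta\ne0$, each $\chi_x(a)$ is real, hence $\chi_x(a)\in\{1,-1\}$. Taking $x$ to be the $s$-th standard generator of $\Gamma$ forces $n_s\mid 2a_s$ for every $s$, i.e.\ $2a=0$ in $\Gamma$, which (as $a\ne0$) is (a); and if $a_s\ne0$ then $0<2a_s<2n_s$ together with $n_s\mid 2a_s$ gives $a_s=n_s/2$, so $n_s$ is even, which is (b). Conversely, under (a) and (b) one computes $\chi_x(a)=(-1)^{\sum_{s:a_s\ne0}x_s}\in\{1,-1\}$, splitting $\Gamma$ into $\Gamma_+=\{x:\chi_x(a)=1\}$ and $\Gamma_-=\{x:\chi_x(a)=-1\}$; a short computation shows $\mathrm{wt}\!\left(\tfrac{2a(x-y)}{n}\right)=\sum_{s:a_s\ne0}(x_s-y_s)$, which is even exactly when $\chi_x(a)=\chi_y(a)$, so $N$ consists precisely of the pairs $(x,y)$ with $x\dot>y$ and $\chi_x(a)=\chi_y(a)$ — one representative of each unordered pair of distinct elements lying in a common part.

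Now for (c) and the converse. Under (a)--(b) the scalar identity says $e^{\mi t\lambda_x}=\alpha+\beta$ for $x\in\Gamma_+$ and $e^{\mi t\lambda_x}=\alpha-\beta$ for $x\in\Gamma_-$; equivalently $e^{\mi t\lambda_x}$ is constant on each part, i.e.\ $e^{\mi t(\lambda_x-\lambda_y)}=1$ whenever $\chi_x(a)=\chi_y(a)$, i.e.\ $\tfrac{t}{2\pi}(\lambda_x-\lambda_y)\in\mathbb{Z}$ for all $(x,y)\in N$, which is (c). For the converse, assume (a)--(c), let $\gamma_1,\gamma_2$ be the constant values of $e^{\mi t\lambda_x}$ on $\Gamma_+,\Gamma_-$, and set $\alpha=\tfrac{\gamma_1+\gamma_2}{2}$, $\beta=\tfrac{\gamma_1-\gamma_2}{2}$. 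Then $|\alpha|^2+|\beta|^2=\tfrac14\bigl(|\gamma_1+\gamma_2|^2+|\gamma_1-\gamma_2|^2\bigr)=1$ and $e^{\mi t\lambda_x}=\alpha+\beta\,\overline{\chi_x(a)}$ holds for all $x$, which reassembles into $H_{A_G}(t)\mathbf{e}_0=\alpha\mathbf{e}_0+\beta\mathbf{e}_a$, i.e.\ FR. The part I expect to be fussiest is the precise bookkeeping of $N$ through the weight function (so that no pair is counted twice and none omitted) and the handling of the degenerate case $\gamma_1=\gamma_2$, where $\beta=0$ and the graph is merely periodic; the conceptual content is entirely in the displayed scalar identity.

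Finally, the ``moreover'' clause is elementary. Since (a) and (b) do not depend on $t$, once they hold the set of FR-times is exactly $\{\,t:\tfrac{t}{2\pi}(\lambda_x-\lambda_y)\in\mathbb{Z}\text{ for all }(x,y)\in N\,\}$. Assuming all the $\lambda_x-\lambda_y$ are integers (and not all zero), put $M=\gcd\{\lambda_x-\lambda_y\}_{(x,y)\in N}$; then $t=2\pi/M$ obviously satisfies every divisibility condition, while conversely Bézout's identity provides integers $c_{xy}$ with $\sum_{(x,y)\in N}c_{xy}(\lambda_x-\lambda_y)=M$, so any admissible $t$ satisfies $\tfrac{t}{2\pi}M=\sum_{(x,y)\in N}c_{xy}\,\tfrac{t}{2\pi}(\lambda_x-\lambda_y)\in\mathbb{Z}$, forcing $t\ge 2\pi/M$ since $t>0$ and $M>0$. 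Hence $2\pi/M$ is the minimum time at which FR occurs, completing the argument.
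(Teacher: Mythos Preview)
Your argument is correct and takes a genuinely different route from the paper's. The paper establishes (a) and (b) by invoking an external symmetry result (if $G$ has FR between distinct $u$ and $v$ then $\mathrm{Aut}(G)_u=\mathrm{Aut}(G)_v$, from \cite{CCTVZ19}), applied to the automorphism $x\mapsto -x$; then, assuming (a) and (b), it expands $|H_{A_G}(t)_{uu}|^2+|H_{A_G}(t)_{uv}|^2$ as a double sum over $\Gamma\times\Gamma$, reduces it to $\frac{1}{|\Gamma|^2}\bigl(2|\Gamma|+4\sum_{(x,y)\in N}\cos(t(\lambda_x-\lambda_y))\bigr)$, and finishes with the counting $|N|=\frac{|\Gamma|^2-2|\Gamma|}{4}$, so that the sum equals $1$ iff every cosine equals $1$. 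You instead reduce the vector identity to the single scalar equation $e^{\mi t\lambda_x}=\alpha+\beta\,\overline{\chi_x(a)}$ per character, derive (a)--(b) internally by comparing $x$ with $-x$, and read off (c) as the statement that $e^{\mi t\lambda_x}$ is constant on each of the two fibres $\chi_x(a)=\pm1$. Your approach is more self-contained (no appeal to \cite{CCTVZ19}), avoids the double-sum manipulation and the cardinality count of $N$, and makes the construction of $\alpha,\beta$ in the converse direction explicit; the paper's approach, on the other hand, yields a closed formula for $|H_{A_G}(t)_{uu}|^2+|H_{A_G}(t)_{uv}|^2$ along the way. You also supply a B\'ezout justification for the ``moreover'' clause, which the paper leaves implicit. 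The only point to watch is exactly the one you flag: the degenerate case $\gamma_1=\gamma_2$ (equivalently $\beta=0$) is periodicity rather than genuine FR between distinct vertices, and both your necessity argument for (a)--(b) and the paper's rely on $\beta\ne0$; this is consistent with the intended reading of the theorem.
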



\section{Proof of Theorem \ref{FRCAY-mian result}}

In this section, we give the proof of Theorem \ref{FRCAY-mian result}. Before proceeding, we first give some definitions and notions.

Let $\mathbb{Z}$ and $\mathbb{C}$ denote the set of integer numbers and complex numbers, respectively. Let $G$ be a graph with adjacency matrix $A_G$.  The eigenvalues of $A_G$ are called the \emph{eigenvalues} of $G$. Suppose that $ \lambda_1\geq\lambda_2\geq\cdots\geq\lambda_{|G|}$ ($\lambda_i$ and $\lambda_j$ may be equal) are all eigenvalues of $G$  and $\mathbf{x}_j$ is the eigenvector associated with $\lambda_{j}$, $j=1,2,\ldots,|G|$.  Let $\mathbf{x}^H$ denote the conjugate transpose of a column vector $\mathbf{x}$. Then, for each eigenvalue $\lambda_j$ of $G$, define
$$
E_{\lambda_j} =\mathbf{x}_j (\mathbf{x}_j)^H,
$$
which is usually called the \emph{eigenprojector} corresponding to  $\lambda_j$ of $G$. Note that $\sum_{j=1}^{|G|}E_{\lambda_j}=I$ (the identity matrix). Then
\begin{equation}
\label{spect1}
A_G=A_G\sum_{j=1}^{|G|}E_{\lambda_j} =\sum_{j=1}^{|G|}A_G\mathbf{x}_j (\mathbf{x}_j)^H  =\sum_{j=1}^{|G|}\lambda_j\mathbf{x}_j (\mathbf{x}_j)^H  =\sum_{j=1}^{|G|}\lambda_jE_{\lambda_j},
\end{equation}
which is called the \emph{spectral decomposition of $A_G$ with respect to the eigenvalues}  (see ``Spectral Theorem for Diagonalizable Matrices'' in \cite[Page 517]{MAALA}). Note that $E_{\lambda_j}^{2}=E_{\lambda_j}$ and $E_{\lambda_j}E_{\lambda_h}=\mathbf{0}$ for $j\neq h$, where $\mathbf{0}$ denotes the zero matrix. So, by (\ref{spect1}), we have
\begin{equation*}\label{SpecDec2-1}
H_{A_G}(t)=\sum_{k\geq 0}\dfrac{\mathrm{i}^{k}A_G^{k}t^{k}}{k!}=\sum_{k\geq 0}\dfrac{\mathrm{i}^{k}\left(\sum\limits_{j=1}^{|G|}\lambda_{j}^{k}E_{\lambda_j}\right)t^{k}}{k!} =\sum_{j=1}^{|G|}\exp(\mathrm{i}t\lambda_{j})E_{\lambda_j}.
\end{equation*}

 Let $\Gamma=\mathbb{Z}_{n_1}\oplus \mathbb{Z}_{n_2}\oplus\cdots\oplus \mathbb{Z}_{n_r}~(n_s\geq2)$ be a finite abelian group. For every $x=(x_1,\ldots, x_r)\in \Gamma$, $(x_s\in \mathbb{Z}_{n_s})$, the mapping
 \begin{equation}\label{character}
 \chi_x: \Gamma\rightarrow \mathbb{C}, ~\chi_x(g)=\prod_{s=1}^r\omega_{n_s}^{x_sg_s}, ~(\text{for~} g=(g_1,\ldots,g_r)\in \Gamma)
 \end{equation}
is a \emph{character} of $\Gamma$, where $\omega_{n_s}=\exp(2\pi \mathrm{i}/n_s)$ is a primitive $n_s$-th root of unity in $\mathbb{C}$. It is easy to verify that
$$
\chi_x(g)=\chi_g(x) \text{~~for~all~}x,g\in \Gamma.
$$
Let $\hat{\Gamma}=\{\chi_x | x\in \Gamma\}$ be the \emph{dual group} or \emph{character group} of $\Gamma$. Clearly, the mapping
$$
\Gamma\rightarrow\hat{\Gamma},~x\mapsto\chi_x
$$
is a group isomorphism.

\medskip

Now we give the proof of Theorem \ref{FRCAY-mian result}.

\medskip

\begin{Tproof}\textbf{~of~Theorem~\ref{FRCAY-mian result}.}
We first prove (a) and (b) are necessary for $G:=\Cay(\Gamma, S)$ to have FR. For a vertex $u\in V(G)$, let $\mathrm{Aut}(G)_u$ denote  the group of automorphisms of $G$ that fix $u$. It is shown in \cite[Prposition 6.4]{CCTVZ19} that if $G$ has FR between distinct vertices $u$ and $v$, then $\mathrm{Aut}(G)_u=\mathrm{Aut}(G)_v$, that is, any automorphism of $G$ that fixes $u$ must fix $v$. Note that the mapping that sends $x$ to $-x$ is an automorphism of $\Gamma$, which is also an automorphism of $G$.  The fixed points of this automorphism are the elements of $G$ with order one
or two. So if $G$ has FR between vertices $0$ and $a=(a_1,a_2,\ldots, a_r)$, then $a$ has order two. Recall that $n=(n_1,n_2,\ldots,n_r)$, then
$$2(a_1,a_2,\ldots, a_r)\equiv 0\pmod{(n_1,n_2,\ldots,n_r)}.$$
Therefore, we have $a_s=n_s/2$ or $a_s=0$. Together with $a\neq(0,0,\ldots, 0)$, this implies that $n_s$ is even for $a_s\neq 0$, $s=1,2,\ldots,r$.

Next we prove that if (a) and (b) hold, then $G$ has FR at time $t$ between vertices $v$ and $v+a$ if and only if (c) holds.

Recall that the order of $G$ is $|\Gamma|$. By \cite[Theorem 5.4.10]{Steinberg12}, take a matrix $P=\frac{1}{\sqrt{|\Gamma|}}(\chi_g(h))_{g,h\in G}$. Let $p_g$ be the $g$-th column of $P$. Then the adjacency matrix $A_G$ and the transition matrix $H_{A_G}(t)$ can be written as follows:
$$
A_G=\sum_{g\in G}\lambda_g E_g,
$$
and
$$
H_{A_G}(t)=\sum_{g\in G}\exp(\mathrm{i}\lambda_g t)E_g,
$$
where
$$
\lambda_g=\sum_{s\in S}\chi_g(s), ~g\in G,
$$
and
$$
E_g=p_gp_g^H=\frac{1}{|\Gamma|}(\chi_g(u-v))_{u,v\in G}.
$$
Then, for two vertices $u, v\in G$, we have
$$H_{A_G}(t)_{u,v}=\sum_{g\in G}\exp(\mathrm{i}\lambda_g t)(E_g)_{u,v}=\frac{1}{|\Gamma|}\sum_{g\in G}\exp(\mathrm{i}\lambda_gt)\chi_g(u-v).$$
In particular, if $u=v$, we have $\chi_g(u-u)=\chi_g(0)=1$, and
$$H_{A_G}(t)_{u,u}=\sum_{g\in G}\exp(\mathrm{i}\lambda_g t)(E_g)_{u,u}=\frac{1}{|\Gamma|}\sum_{g\in G}\exp(\mathrm{i}\lambda_gt).$$
Thus, for $u-v=a=(a_1,a_2,\ldots,a_r)$, we have
\begin{align}\label{FR-htuu+htuv} \nonumber
& |H_{A_G}(t)_{uu}|^2 + |H_{A_G}(t)_{uv}|^2 \\ \nonumber
&=\left|\frac{1}{|\Gamma|}\sum_{g\in G}\exp(\mathrm{i}\lambda_gt)\right|^2+\left|\frac{1}{|\Gamma|}\sum_{g\in G}\exp(\mathrm{i}\lambda_gt)\chi_g(a)\right|^2 \\ \nonumber
&=\frac{1}{|\Gamma|^2}\left(\left|\sum_{g\in G}\exp(\mathrm{i}\lambda_gt)\right|^2+\left|\sum_{g\in G}\exp(\mathrm{i}\lambda_gt)\chi_g(a)\right|^2 \right) \\ \nonumber
&=\frac{1}{|\Gamma|^2}\left(\left(\sum_{g\in G}\exp(\mathrm{i}\lambda_gt)\right) \left(\overline{\sum_{g\in G}\exp(\mathrm{i}\lambda_gt)}\right) + \left(\sum_{g\in G}\exp(\mathrm{i}\lambda_gt)\chi_g(a)\right) \left(\overline{\sum_{g\in G}\exp(\mathrm{i}\lambda_gt)\chi_g(a)}\right) \right) \\ \nonumber
&=\frac{1}{|\Gamma|^2}\left(\left(\sum_{g\in G}\exp(\mathrm{i}\lambda_gt)\right) \left(\sum_{g\in G}\overline{\exp(\mathrm{i}\lambda_gt)}\right) + \left(\sum_{g\in G}\exp(\mathrm{i}\lambda_gt)\chi_g(a)\right) \left(\sum_{g\in G}\overline{\exp(\mathrm{i}\lambda_gt)}\,\overline{\chi_g(a)} \right)\right) \\ \nonumber
&=\frac{1}{|\Gamma|^2}\left(\sum_{x,y\in G}\exp(\mathrm{i}t(\lambda_x-\lambda_y))+ \sum_{x,y\in G}\exp(\mathrm{i}t(\lambda_x-\lambda_y)) \chi_{a}(x-y)\right)\\ \nonumber
&=\frac{1}{|\Gamma|^2}\left(\sum_{x,y\in G}\exp(\mathrm{i}t(\lambda_x-\lambda_y))(\chi_{a}(x-y)+1)\right)\\
&=\frac{1}{|\Gamma|^2}\left(2|\Gamma|+\sum_{x,y\in G,~x\neq y}\exp(\mathrm{i}t(\lambda_x-\lambda_y))(\chi_{a}(x-y)+1)\right).
\end{align}
Recall that $x\dot>y$ denotes that there exists an $i\in\{1,2,\ldots,r\}$ such that $x_1=y_1,\ldots, x_{i-1}=y_{i-1}$ and $x_i>y_i$. Then we divide the set of ordered pairs $(x,y)$ with $x,y\in G$ and $x\neq y$ into two sets $B,D$ of the same order, where $B=\{ (x,y)| x,y\in G,x\dot>y\}$ and $D=\{ (x,y)| x,y\in G,y\dot>x\}$. Moreover, there is a one-to-one correspondence between $B$ and $D$.
Notice that
$$
\overline{\exp(\mathrm{i}t(\lambda_y-\lambda_x))(\chi_{a}(y-x)+1)} =\exp(\mathrm{i}t(\lambda_x-\lambda_y))(\chi_{a}(x-y)+1).
$$
Then (\ref{FR-htuu+htuv}) amounts to
\begin{equation}\label{FRCAY-EQUATION5}
|H_{A_G}(t)_{uu}|^2 + |H_{A_G}(t)_{uv}|^2=\frac{1}{|\Gamma|^2}\left(2|\Gamma|+2\sum_{(x,y)\in B}\mathrm{Re}\left(\exp(\mathrm{i}t(\lambda_x-\lambda_y))(\chi_{a}(x-y)+1)\right)\right),
\end{equation}
where $\mathrm{Re}(*)$ denotes the real part of the complex number $*$. Note that $a=(a_1,a_2,\ldots,a_r)$ and $x-y=((x-y)_1, (x-y)_2,\ldots, (x-y)_r)$. By (\ref{character}), we have
\begin{equation}\label{FRCAY-EQUATION6}
\chi_{a}(x-y)=\exp\left(2\pi \mathrm{i} \sum_{s=1}^r\frac{a_s(x-y)_s}{n_s}\right).
\end{equation}
Recall that $N=\left\{(x,y)\left| x,y\in G, x\dot>y, \mathrm{wt}\left(\frac{2a(x-y)}{n}\right) \text{~is~even}\right.\right\}$ and $a$ is of order $2$. Plugging (\ref{FRCAY-EQUATION6}) into (\ref{FRCAY-EQUATION5}), we have
\begin{align*}
& |H_{A_G}(t)_{uu}|^2 + |H_{A_G}(t)_{uv}|^2 \\ \nonumber
&=\frac{1}{|\Gamma|^2}\left(2|\Gamma|+2\sum_{(x,y)\in B}\mathrm{Re}\left(\exp(\mathrm{i}t(\lambda_x-\lambda_y))+\exp \left(\mathrm{i}t(\lambda_x-\lambda_y)+2\pi \mathrm{i}\sum_{s=1}^r\frac{a_s(x-y)_s}{n_s}\right)\right)\right)\\ \nonumber
&=\frac{1}{|\Gamma|^2}\left(2|\Gamma|+2\sum_{(x,y)\in B}\left(\cos\left(t(\lambda_x-\lambda_y)\right)+\cos\left(t(\lambda_x-\lambda_y)+2\pi\sum_{s=1}^r\frac{a_s(x-y)_s}{n_s}\right)\right)\right)\\ \nonumber
&=\frac{1}{|\Gamma|^2}\left(2|\Gamma|+2\sum_{(x,y)\in B}\left(\cos\left(t(\lambda_x-\lambda_y)\right)+\cos\left(t(\lambda_x-\lambda_y)+\pi \mathrm{wt}\left(\frac{2a(x-y)}{n}\right)\right)\right)\right)\\ \nonumber
&=\frac{1}{|\Gamma|^2}\left(2|\Gamma|+4\sum_{(x,y)\in N}\cos\left(t(\lambda_x-\lambda_y)\right)\right).
\end{align*}
\noindent Therefore, $G$ has FR if and only if
\begin{equation*}
\frac{1}{|\Gamma|^2}\left(2|\Gamma|+4\sum_{(x,y)\in N}\cos\left(t(\lambda_x-\lambda_y)\right)\right)=1,
\end{equation*}
that is,
\begin{equation}\label{FRCAY-EQUATION4}
\sum_{(x,y)\in N}\cos\left(t(\lambda_x-\lambda_y)\right)=\frac{|\Gamma|^2-2|\Gamma|}{4}.
\end{equation}

Note that the number of  ordered pairs $(x,y)$ such that $\mathrm{wt}\left(\frac{2a(x-y)}{n}\right)$ is even is $|\Gamma|^2/2$. Then the number of $(x,y)$ such that $x\neq y$ and $\mathrm{wt}\left(\frac{2a(x-y)}{n}\right)$  is even  is $|\Gamma|^2/2-|\Gamma|$. Thus the number of $(x,y)$ such that $x\dot>y$ and $\mathrm{wt}\left(\frac{2a(x-y)}{n}\right)$ is even  is $\frac{|\Gamma|^2/2-|\Gamma|}{2}=\frac{|\Gamma|^2-2|\Gamma|}{4}$, that is, $|N|=\frac{|\Gamma|^2-2|\Gamma|}{4}$. Since $\cos\left(t(\lambda_x-\lambda_y)\right)\leq1$, (\ref{FRCAY-EQUATION4}) holds if and only if $\cos\left(t(\lambda_x-\lambda_y)\right)=1$ for  $(x,y)\in N$.  That is, $\frac{t}{2\pi}(\lambda_x-\lambda_y)\in \mathbb{Z}$ for $(x,y)\in N$. This completes the proof.
\qed\end{Tproof}

In Theorem \ref{FRCAY-mian result}, if $r=1$, then $G=\Cay(\mathbb{Z}_n,S)$ is a circulant graph.  At this time, the eigenvalues of circulant graphs are $\lambda_x=\sum_{s\in S}\omega_n^{xs}$, $x\in \mathbb{Z}_n$. Then we have the following result.
\begin{cor}\label{FRCAY-cor1}
 Let
 $N=\{ (x,y)|x,y\in \mathbb{Z}_n, x>y, 2|(x-y)\}.$
 A circulant graph $G=\Cay(\mathbb{Z}_n, S)$ has FR at time $t$ between vertices $u$ and $v$ if and only if
 \begin{itemize}
\item[\rm(a)] $n$ is even;
\item[\rm(b)] $u$ and $v$ are antipodal vertices, that is, $u-v=n/2$;
\item[\rm(c)] $\frac{t}{2\pi}\sum\limits_{s\in S}(\omega_n^{xs}-\omega_n^{ys})\in \mathbb{Z}, \text{~for~all~}  (x,y)\in N$, where $\omega_{n}=\exp(2\pi \mathrm{i}/n)$.
\end{itemize}
Moreover, if these conditions hold, and $\sum\limits_{s\in S}(\omega_n^{xs}-\omega_n^{ys})$ are integers for all $(x,y)\in N$, then there is a minimum time $t=\frac{2\pi}{M}$ at which FR occurs between $u$ and $v$, where
$$M=\gcd\left(\left\{\sum\limits_{s\in S}(\omega_n^{xs}-\omega_n^{ys})\right\}_{(x,y)\in N}\right).$$
\end{cor}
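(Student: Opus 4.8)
The plan is to obtain Corollary \ref{FRCAY-cor1} as the rank-one specialization $r=1$ of Theorem \ref{FRCAY-mian result}, translating each ingredient in turn. Put $\Gamma=\mathbb{Z}_n$, so $n=(n)$, and write $a=u-v$; by vertex-transitivity of $\Cay(\mathbb{Z}_n,S)$, FR between $u$ and $v$ is the same as FR between the base vertex $v$ and $v+a$, which is exactly what the theorem addresses.

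First I would handle conditions (a) and (b). By Theorem \ref{FRCAY-mian result}(a), FR forces $a$ to have order two in $\mathbb{Z}_n$. An element $a\in\{1,2,\ldots,n-1\}$ satisfies $2a\equiv 0 \pmod n$ precisely when $n\mid 2a$; since $0<2a<2n$, this means $2a=n$, so $n$ is even and $a=n/2$. Conversely, when $n$ is even, $n/2$ has order two, and Theorem \ref{FRCAY-mian result}(b) (the only nonzero coordinate has even modulus) is automatic. Hence, in the case $r=1$, conditions (a) and (b) of the theorem together are equivalent to ``$n$ is even and $u-v=n/2$'', i.e.\ conditions (a) and (b) of the corollary (antipodal vertices).

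Next I would compute the set $N$ and the eigenvalues for $r=1$. Here $x\dot>y$ simply means $x>y$ as integers in $\{0,\ldots,n-1\}$. With $a=n/2$ and $n=(n)$, one has $\mathrm{wt}\!\left(\tfrac{2a(x-y)}{n}\right)=\tfrac{2(n/2)(x-y)}{n}=x-y$, so the parity requirement ``$\mathrm{wt}(\tfrac{2a(x-y)}{n})$ is even'' becomes ``$2\mid(x-y)$''; thus $N$ collapses to $\{(x,y): x,y\in\mathbb{Z}_n,\ x>y,\ 2\mid(x-y)\}$, the set in the corollary. Moreover $\lambda_x=\sum_{g\in S}\prod_{s=1}^{1}e^{\frac{2\pi \mathrm{i}}{n}xg}=\sum_{s\in S}\omega_n^{xs}$, the classical eigenvalues of the circulant graph, so $\lambda_x-\lambda_y=\sum_{s\in S}(\omega_n^{xs}-\omega_n^{ys})$. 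Substituting into Theorem \ref{FRCAY-mian result}(c) yields condition (c) of the corollary verbatim, and the ``moreover'' clause on the minimum time $t=2\pi/M$ translates directly into $M=\gcd\bigl(\{\sum_{s\in S}(\omega_n^{xs}-\omega_n^{ys})\}_{(x,y)\in N}\bigr)$.

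Since the argument is just an unwinding of definitions once $r=1$ is imposed, there is no genuine obstacle; the only spots deserving a line of care are the elementary fact that $n/2$ is the unique element of order two in $\mathbb{Z}_n$ (when it exists) and the observation that the weight $\mathrm{wt}(2a(x-y)/n)$ reduces to the single integer $x-y$ in the rank-one setting.
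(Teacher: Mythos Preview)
Your proposal is correct and follows exactly the paper's approach: the paper presents Corollary~\ref{FRCAY-cor1} simply as the $r=1$ specialization of Theorem~\ref{FRCAY-mian result}, noting only that the eigenvalues become $\lambda_x=\sum_{s\in S}\omega_n^{xs}$, and leaves the remaining translation of conditions to the reader. Your write-up just makes explicit the routine checks (the unique order-two element of $\mathbb{Z}_n$ and the reduction of $\mathrm{wt}(2a(x-y)/n)$ to $x-y$) that the paper takes for granted.
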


\begin{figure*}[h]
\begin{minipage}{0.5\textwidth}
\begin{center}
	\begin{tikzpicture}[scale=0.8,auto,swap]
    \tikzstyle{blackvertex}=[circle,draw=black,fill=black]
    \tikzstyle{bluevertex}=[circle,draw=blue,fill=blue]
    \tikzstyle{greenvertex}=[circle,draw=green,fill=green]
    \tikzstyle{yellowvertex}=[circle,draw=yellow,fill=yellow]
    \tikzstyle{brownvertex}=[circle,draw=brown,fill=brown]
    \tikzstyle{redvertex}=[circle,draw=red,fill=red]

   \node [blackvertex,scale=0.5] (a1) at (60:2) {};
    \node [blackvertex,scale=0.5] (a2) at (120:2) {};
    \node [blackvertex,scale=0.5] (a3) at (180:2) {};
    \node [blackvertex,scale=0.5] (a4) at (240:2) {};
    \node [blackvertex,scale=0.5] (a5) at (300:2) {};
    \node [blackvertex,scale=0.5] (a6) at (0:2) {};

    \node [scale=0.7] at (60:2.3) {0};
    \node [scale=0.7] at (120:2.3) {5};
    \node [scale=0.7] at (180:2.3) {4};
    \node [scale=0.7] at (240:2.3) {3};
    \node [scale=0.7] at (300:2.3) {2};
    \node [scale=0.7] at (0:2.3) {1};

    \draw [black,thick] (a1) -- (a2) -- (a3) -- (a4) -- (a5) -- (a6) -- (a1);
\end{tikzpicture}
\caption{$\Cay(\mathbb{Z}_6, S_1)$}
\label{figure1}
\end{center}
\end{minipage}%
\begin{minipage}{0.5\textwidth}
\begin{center}
	\begin{tikzpicture}[scale=0.8,auto,swap]
    \tikzstyle{blackvertex}=[circle,draw=black,fill=black]
    \tikzstyle{bluevertex}=[circle,draw=blue,fill=blue]
    \tikzstyle{greenvertex}=[circle,draw=green,fill=green]
    \tikzstyle{yellowvertex}=[circle,draw=yellow,fill=yellow]
    \tikzstyle{brownvertex}=[circle,draw=brown,fill=brown]
    \tikzstyle{redvertex}=[circle,draw=red,fill=red]

   \node [blackvertex,scale=0.5] (a1) at (90:2) {};
    \node [blackvertex,scale=0.5] (a2) at (180:2) {};
    \node [blackvertex,scale=0.5] (a3) at (270:2) {};
    \node [blackvertex,scale=0.5] (a4) at (0:2) {};

    \node [scale=0.7] at (90:2.3) {0};
    \node [scale=0.7] at (180:2.3) {1};
    \node [scale=0.7] at (270:2.3) {2};
    \node [scale=0.7] at (0:2.3) {3};

    \draw [black,thick] (a1) -- (a2) -- (a3) -- (a4)-- (a1);
\end{tikzpicture}
\caption{$\Cay(\mathbb{Z}_4, S_2)$}
\label{figure2}
\end{center}
\end{minipage}
\end{figure*}

Note \cite[Theorem 7.5]{CCTVZ19} that a cycle $\Cay(\mathbb{Z}_n, S)$, where $S=\{1,n-1\}$, has FR if and only if it has four or six vertices. In the following, we verify this result by Corollary \ref{FRCAY-cor1}.

\begin{example}
{\em Let  $S_1=\{1,5\}$. Then $G_1=\Cay(\mathbb{Z}_6, S_1)$ is a cycle of order six (as shown in Figure \ref{figure1}). It is easy to verify that
$$N=\{(2,0), (3,1),(4,0),(4,2),(5,1),(5,3)\}.$$
By a simple calculation, we have
$$\sum\limits_{s\in S_1}(\omega_n^{xs}-\omega_n^{ys})=2\cos\left(\frac{\pi}{3}x\right)-2\cos\left(\frac{\pi}{3}y\right).$$
For  $(x,y)\in N$, the corresponding values of $2\cos(\frac{\pi}{3}x)-2\cos(\frac{\pi}{3}y)$ are $-3, -3, -3, 0, 0, 3$, respectively. Therefore, $G_1$ has FR at time $t=\frac{2}{3}\pi$ between antipodal vertices by Corollary \ref{FRCAY-cor1}.  Moreover, if $t<\frac{2}{3}\pi$, then $G_1$ has no FR.
}
\end{example}

\begin{example}
{\em
 Let $S_2=\{1,3\}$. Then $G_2=\Cay(\mathbb{Z}_4, S_2)$ is a cycle of order four (as shown in Figure \ref{figure2}). It is easy to verify that
$$N=\{(2,0), (3,1)\}.$$
By a simple calculation, we have
$$\sum\limits_{s\in S_2}(\omega_n^{xs}-\omega_n^{ys})=2\cos\left(\frac{\pi}{2}x\right)-2\cos\left(\frac{\pi}{2}y\right).$$
For  $(x,y)\in N$, the corresponding values of $2\cos\left(\frac{\pi}{2}x\right)-2\cos\left(\frac{\pi}{2}y\right)$ are $-4, 0$, respectively. Therefore, $G_2$ has FR at time $t=\frac{\pi}{2}$ between antipodal vertices by Corollary \ref{FRCAY-cor1}.  Moreover, if $t<\frac{\pi}{2}$, then $G_2$ has no FR.
}
\end{example}

For a rational number $0\neq \mu=\frac{p}{q}\in \mathbb{Q}$, where $p$ and $q$ are coprime and $q>0$, we let $N(\mu)$ denote $q$.  The definition of $N(\mu)$ and the following result comes from \cite{Berger18}.
\begin{lemma}\label{FRCAY-LEMMA1}\emph{(see \cite[Theorem 1.2]{Berger18})}
Suppose $\mu_1$ and $\mu_2$ are rational numbers such that $\mu_1\pm\mu_2$ are not integers. Then the set $\{1,\cos(\mu_1\pi), \cos(\mu_2\pi)\}$ is linearly independent over $\mathbb{Q}$ if and only if $N(\mu_1),N(\mu_2)\geq4$ and $(N(\mu_1), N(\mu_2))\neq(5,5)$.
\end{lemma}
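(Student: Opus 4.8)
\medskip
\noindent\textbf{Proof strategy.} This is \cite[Theorem~1.2]{Berger18}; here is how the argument should run. Write $\zeta_n=e^{2\pi\mathrm{i}/n}$ and let $\mathbb{Q}(\zeta_n)^+=\mathbb{Q}(\zeta_n+\zeta_n^{-1})$ be the maximal real subfield of $\mathbb{Q}(\zeta_n)$. The conditions are \emph{necessary}: if $N(\mu_i)\le 3$ for some $i$, then $\cos(\mu_i\pi)\in\{0,\pm\tfrac12,\pm1\}\subseteq\mathbb{Q}$ by Niven's theorem, so $\{1,\cos(\mu_1\pi),\cos(\mu_2\pi)\}$ is already $\mathbb{Q}$-dependent; and if $N(\mu_1)=N(\mu_2)=5$, then each $\cos(\mu_j\pi)\in\{\pm\tfrac{1+\sqrt5}{4},\pm\tfrac{\sqrt5-1}{4}\}\subseteq\mathbb{Q}(\sqrt5)$, so the three numbers lie in a $2$-dimensional $\mathbb{Q}$-space and are dependent. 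For the converse I assume the hypotheses together with a relation $a+b\cos(\mu_1\pi)+c\cos(\mu_2\pi)=0$, $(a,b,c)\in\mathbb{Q}^3\setminus\{0\}$, and seek a contradiction.

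First, $b,c\neq 0$ (otherwise some $\cos(\mu_i\pi)\in\mathbb{Q}$, forcing $N(\mu_i)\le 3$), so $\mathbb{Q}(\cos(\mu_1\pi))=\mathbb{Q}(\cos(\mu_2\pi))=:K$. Since $\cos(\tfrac{p}{q}\pi)$ generates $\mathbb{Q}(\zeta_m)^+$, where $m$ is the order of $e^{\mathrm{i}\pi p/q}$, and since the reduced modulus $m$ (with $m\not\equiv 2\bmod 4$) is determined by and determines $q=N(\mu_i)$, while $m\mapsto\mathbb{Q}(\zeta_m)^+$ is injective on such moduli $\ge 5$ (the conductor of $\mathbb{Q}(\zeta_m)^+$ being $m$), one obtains $N(\mu_1)=N(\mu_2)=:q$ with a common reduced modulus $m$. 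Now $q\neq 5$ since $(q,q)\neq(5,5)$, and $q\in\{4,6\}$ is impossible, for then $\cos(\mu_i\pi)=\pm\tfrac{\sqrt2}{2}$ (resp.\ $\pm\tfrac{\sqrt3}{2}$), whence $\cos(\mu_2\pi)=\pm\cos(\mu_1\pi)$ and $\mu_1\pm\mu_2\in\mathbb{Z}$. Hence $q\ge 7$, so $m\ge 7$.

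After negating $\cos(\mu_j\pi)$ if necessary (which changes nothing), we may write $\cos(\mu_j\pi)=\cos(2\pi k_j/m)$ with $\gcd(k_j,m)=1$; as both generate $K$ they are Galois-conjugate, so the relation gives $v=\tau(u)=\alpha+\beta u$, where $u=\cos(2\pi k_1/m)$, $v=\cos(2\pi k_2/m)$, $\alpha,\beta\in\mathbb{Q}$, $\beta\neq 0$, $\tau\in\mathrm{Gal}(K/\mathbb{Q})$. Iterating $\tau$ forces $\beta^{\mathrm{ord}(\tau)}=1$, so $\beta=\pm1$; the case $\beta=1$ gives $\tau=\mathrm{id}$, $v=u$, hence $\mu_1\pm\mu_2\in\mathbb{Z}$, excluded. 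So $\beta=-1$: $\tau$ is a \emph{non-identity involution} of $\mathrm{Gal}(K/\mathbb{Q})\cong(\mathbb{Z}/m)^\times/\{\pm1\}$, $u+\tau(u)\in\mathbb{Q}$, and $\tau(u)\neq\pm u$ (the case $-u$ again giving $\mu_1\pm\mu_2\in\mathbb{Z}$). Let $\tau$ correspond to $a$ with $a^2\equiv 1$, $a\not\equiv\pm1\pmod m$. Splitting $m=m_+m_-$ ($\gcd=1$, both $>1$) according to the sign of $a$ modulo each prime-power factor of $m$ — treating the $2$-part separately when $8\mid m$ and $a\equiv 2^{e-1}{\pm}1\bmod 2^e$, where $\tau$ sends $\zeta_{2^e}\mapsto-\zeta_{2^e}^{\pm1}$ — and factoring $\zeta_m$ accordingly, a short computation shows $u+\tau(u)$ equals $\pm2$ times a product of two real numbers coming respectively from $m_+$ and $m_-$: cosines of primitive roots of unity, or (in the excepted $2$-adic case) sines, which lie in cyclotomic real fields of essentially coprime conductor and hence meet in $\mathbb{Q}$. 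If one factor vanishes, then $u+\tau(u)=0$, i.e.\ $\tau(u)=-u$, excluded; otherwise rationality of the product forces one factor into $\mathbb{Q}$, Niven's theorem makes the corresponding modulus $\le 3$, and by symmetry \emph{both} $m_\pm$ equal $3$, contradicting $\gcd(m_+,m_-)=1$. This contradiction completes the proof.

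The crux is this last step — the irrationality of $u+\tau(u)$. When $m$ is odd or $4\,\Vert\,m$, the CRT splitting and the linear disjointness of $\mathbb{Q}(\zeta_{m_+})$ and $\mathbb{Q}(\zeta_{m_-})$ finish it at once. The case $8\mid m$ is where I expect the real work: there $\tau$ may act on the $2$-part by $\zeta_{2^e}\mapsto-\zeta_{2^e}$, turning $u+\tau(u)$ into a product of \emph{sines}, which lie in $\mathbb{Q}(\zeta_{4n})^+$ rather than $\mathbb{Q}(\zeta_n)^+$; one must check that the extra factor $4$ does not spoil the coprimality underlying the disjointness step (it does not — the two relevant moduli have $2$-parts $2^e$ and $4$, so their gcd is $4$, and the real subfields still meet in $\mathbb{Q}$), and then invoke the sine form of Niven's theorem, namely that a sine of a rational multiple of $2\pi$ with odd denominator $>1$ is irrational. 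An alternative to the entire Galois reduction would be to analyze the vanishing sum of at most five roots of unity $2a+b(\zeta_1+\zeta_1^{-1})+c(\zeta_2+\zeta_2^{-1})=0$, with $\zeta_j=e^{\mathrm{i}\mu_j\pi}$, via the Conway--Jones classification of minimal vanishing sums of roots of unity of small length, which yields the same dichotomy between $N(\mu_1)=N(\mu_2)$ and $\mu_1\pm\mu_2\in\mathbb{Z}$.
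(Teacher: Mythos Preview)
The paper does not prove this lemma at all: it is quoted verbatim as \cite[Theorem~1.2]{Berger18} and used as a black box in the subsequent theorem on cycles. There is therefore no ``paper's own proof'' to compare your attempt against.

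That said, your sketch is a reasonable outline of the argument one finds in Berger's paper, and the overall architecture (reduce to $\mathbb{Q}(\cos\mu_1\pi)=\mathbb{Q}(\cos\mu_2\pi)$, identify the common conductor, then analyze the Galois involution and the resulting sum-to-product factorization) is sound. Two small points you should tighten if you intend this as a standalone proof rather than a pointer to \cite{Berger18}: first, the conclusion ``by symmetry both $m_\pm$ equal $3$'' is not quite right---what you actually get is $m_+,m_-\in\{1,2,3\}$, both $>1$, coprime, hence $\{m_+,m_-\}=\{2,3\}$ and $m=6<7$, which is the desired contradiction. Second, the $8\mid m$ case you flag as ``where the real work is'' deserves an explicit computation rather than a parenthetical; the sine factors $\sin(2\pi k/2^e)$ live in $\mathbb{Q}(\zeta_{2^{e}})^+$ (not $\mathbb{Q}(\zeta_{4\cdot 2^e})^+$), and you need to check carefully that the two factors still generate linearly disjoint fields over~$\mathbb{Q}$. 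The alternative route via Conway--Jones that you mention at the end is in fact closer in spirit to how such results are often proved and would also work.

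For the purposes of the present paper, however, none of this is needed: simply citing \cite{Berger18} is what the authors do, and that is entirely appropriate here.
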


\begin{theorem}\emph{(see \cite[Theorem 7.5]{CCTVZ19})}
A cycle $\Cay(\mathbb{Z}_n, S)$, where $S=\{1,n-1\}$, does not have FR if $n\not=4,6$.
\end{theorem}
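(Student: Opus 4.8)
The plan is to combine Corollary~\ref{FRCAY-cor1} with an incommensurability argument for the eigenvalues of the cycle. Write $G=\Cay(\mathbb{Z}_n,\{1,n-1\})$, i.e. $G=C_n$, whose eigenvalues are $\lambda_x=\omega_n^x+\omega_n^{-x}=2\cos(2\pi x/n)$ for $x\in\mathbb{Z}_n$, with $\omega_n=\exp(2\pi\mathrm{i}/n)$. If $n$ is odd, then Corollary~\ref{FRCAY-cor1}(a) already rules out FR, so I may assume $n$ is even with $n\neq 4,6$, hence $n\ge 8$. By Corollary~\ref{FRCAY-cor1}, a necessary condition for $G$ to have FR at a time $t$ is that $\frac{t}{2\pi}(\lambda_x-\lambda_y)\in\mathbb{Z}$ for every $(x,y)\in N=\{(x,y):x,y\in\mathbb{Z}_n,\ x>y,\ 2\mid(x-y)\}$, and I will show this fails for every $t\neq 0$.

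The heart of the argument is to isolate two pairs in $N$ whose eigenvalue differences are provably incommensurable. Since $n\ge 8$, both $(2,0)$ and $(3,1)$ lie in $N$, and using $\cos\alpha-\cos\beta=-2\sin\frac{\alpha+\beta}{2}\sin\frac{\alpha-\beta}{2}$ one computes
$$\lambda_2-\lambda_0=-4\sin^2\frac{2\pi}{n},\qquad \lambda_3-\lambda_1=-4\sin\frac{4\pi}{n}\sin\frac{2\pi}{n},$$
both of which are nonzero for $n\ge 8$ (there $0<4\pi/n\le\pi/2$), so that
$$\frac{\lambda_3-\lambda_1}{\lambda_2-\lambda_0}=\frac{\sin(4\pi/n)}{\sin(2\pi/n)}=2\cos\frac{2\pi}{n}.$$
Everything then reduces to the arithmetic nature of $2\cos(2\pi/n)=\omega_n+\omega_n^{-1}$: it is a sum of two roots of unity, hence an algebraic integer, so if it were rational it would be an ordinary integer; but $1<2\cos(2\pi/n)<2$ for $n\ge 8$, a contradiction. (Equivalently, $\mathbb{Q}(2\cos(2\pi/n))$ is the maximal real subfield of $\mathbb{Q}(\omega_n)$, of degree $\varphi(n)/2\ge 2$ over $\mathbb{Q}$; or invoke Niven's theorem.) Hence $(\lambda_3-\lambda_1)/(\lambda_2-\lambda_0)$ is irrational.

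To finish, suppose $G$ had FR at some $t\neq 0$. Then $\frac{t}{2\pi}(\lambda_2-\lambda_0)=m$ for some $m\in\mathbb{Z}$, with $m\neq 0$ because $t\neq 0$ and $\lambda_2-\lambda_0\neq 0$. Consequently $\frac{t}{2\pi}(\lambda_3-\lambda_1)=2m\cos(2\pi/n)$ is a nonzero integer multiple of an irrational number, so it is not an integer, contradicting the condition of Corollary~\ref{FRCAY-cor1}(c) applied to $(3,1)\in N$. Therefore $G$ has no FR.

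The only genuinely delicate point is the choice of the two pairs: both must lie in $N$ for every even $n\ge 8$, both must give a nonzero eigenvalue difference, and their ratio must be a quantity whose rationality can be decided exactly. The pair $(2,0)$ together with $(3,1)$ threads all three needles precisely because it straddles the even and the odd residue classes that make up $N$, and this is what produces the clean ratio $2\cos(2\pi/n)$, rational only for $n\in\{1,2,3,4,6\}$. Pairs drawn from a single residue class of $N$, such as $(2,0)$ and $(4,0)$, instead give a ratio of the form $\tfrac14\sec^2(2\pi/n)$, which is rational also for $n=8$ and $n=12$ and would force extra case checking; for the same reason Lemma~\ref{FRCAY-LEMMA1} is not needed for this argument, since a routing through it would have to treat small $n$ (e.g. $n=8$, where every eigenvalue difference within a single residue class of $N$ is already rational) separately.
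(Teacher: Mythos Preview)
Your proof is correct and genuinely cleaner than the paper's. Both arguments start from Corollary~\ref{FRCAY-cor1} and aim to exhibit two pairs in $N$ whose eigenvalue differences have an irrational ratio, but the executions diverge. The paper works with pairs from the same parity class, typically $(2,0)$ and $(4,0)$, invokes Lemma~\ref{FRCAY-LEMMA1} (Berger's linear-independence criterion for cosines) to handle generic $n$, and then deals with the exceptional values $n=8,10,12,16,20,24,28$ by ad hoc computations split over the cases $n\equiv 0,2\pmod 4$. You instead straddle the two parity classes with $(2,0)$ and $(3,1)$, which collapses the ratio to $2\cos(2\pi/n)$; irrationality for every even $n\ge 8$ then follows in one line from the fact that $\omega_n+\omega_n^{-1}$ is an algebraic integer strictly between $1$ and $2$. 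This avoids both the appeal to Lemma~\ref{FRCAY-LEMMA1} and all case analysis, and explains uniformly why $n=4,6$ are the only survivors. One small slip in your closing commentary: the ratio for $(4,0)$ versus $(2,0)$ is $4\cos^2(2\pi/n)$, not $\tfrac14\sec^2(2\pi/n)$ (you inverted it), and for $n=8$ the \emph{odd} parity class already contains irrational differences such as $\lambda_3-\lambda_1=-2\sqrt{2}$; these do not affect the proof itself, only the side remark.
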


\begin{proof}
It is easy to verify that
$$N=\{(x,y)|x,y\in \mathbb{Z}_n, x>y, 2|(x-y)\}.$$
The condition (c) in Corollary \ref{FRCAY-cor1}  implies that
\begin{equation}\label{equation40}
\frac{\frac{t}{2\pi}\left(\sum\limits_{s\in S}(\omega_n^{xs}-\omega_n^{ys})\right)}{\frac{t}{2\pi}\left(\sum\limits_{s\in S}(\omega_n^{x's}-\omega_n^{y's})\right)}=
\frac{\cos\left(\frac{2\pi x}{n}\right)-\cos\left(\frac{2\pi y}{n}\right)}{\cos\left(\frac{2\pi x'}{n}\right)-2\cos\left(\frac{2\pi y'}{n}\right)}\in \mathbb{Q}, ~~\forall (x,y), (x',y')\in N.
\end{equation}
Hence it is sufficient to prove that (\ref{equation40}) does not hold for some $(x,y), (x',y')\in N$, in order to illustrate a cycle $C_n$ has no FR when $n\neq 4,6$.

\noindent\emph{Case 1.} $n\equiv 2 \pmod 4$. If $n\neq 6,10$, then $N(\frac{4}{n})=N(\frac{8}{n})\geq7$. By Lemma \ref{FRCAY-LEMMA1}, the set $\{1,\cos(\frac{4}{n}\pi), \cos(\frac{8}{n}\pi)\}$ is linearly independent over $\mathbb{Q}$. Let $(x,y)=(2,0)$ and $(x',y')=(4,0)$. Then
$$\frac{\cos(\frac{4}{n}\pi)-1}{\cos(\frac{8}{n}\pi)-1}\notin \mathbb{Q}.$$
If $n=10$, then
$$\frac{\cos(\frac{4}{n}\pi)-1}{\cos(\frac{8}{n}\pi)-\cos(\frac{4}{n}\pi)}=\frac{\frac{-1+\sqrt{5}}{4}-1}{\frac{-1-\sqrt{5}}{4}-\frac{-1+\sqrt{5}}{4}}=\frac{\sqrt{5}-1}{2}\notin \mathbb{Q}.$$

\noindent\emph{Case 2.} $n\equiv 0 \pmod 4$. For $n\geq 32$, by Lemma \ref{FRCAY-LEMMA1}, the set $\{1, \cos(\frac{4}{n}\pi), \cos(\frac{8}{n}\pi)\}$ is linearly independent over $\mathbb{Q}$. Then
$$\frac{\cos(\frac{4}{n}\pi)-1}{\cos(\frac{8}{n}\pi)-1}\notin \mathbb{Q}.$$
If $n=16, 20, 24, 28$, then
$$\frac{\cos(\frac{4}{n}\pi)-1}{\cos(\frac{2\pi\cdot n/2}{n})-1}=\frac{\cos(\frac{4}{n}\pi)-1}{-1-1}=\frac{1}{2}(1-\cos(\frac{4}{n}\pi))\notin \mathbb{Q}.$$
If $n=8,12$, then
$$\frac{\cos(\frac{6}{n}\pi)-\cos(\frac{2}{n}\pi)}{\cos(\frac{2\pi\cdot n/2}{n})-1}=\frac{\cos(\frac{6}{n}\pi)-\cos(\frac{2}{n}\pi)}{-1-1}=\frac{1}{2}(\cos(\frac{2}{n}\pi)-\cos(\frac{4}{n}\pi))\notin \mathbb{Q}.$$

Therefore, $\Cay(\mathbb{Z}_n, S)$, where $S=\{1,n-1\}$, does not have FR if $n\not=4,6$. \qed
\end{proof}

\begin{figure*}[h]
\begin{center}
	\begin{tikzpicture}[scale=0.8,auto,swap]
    \tikzstyle{blackvertex}=[circle,draw=black,fill=black]
    \tikzstyle{bluevertex}=[circle,draw=blue,fill=blue]
    \tikzstyle{greenvertex}=[circle,draw=green,fill=green]
    \tikzstyle{yellowvertex}=[circle,draw=yellow,fill=yellow]
    \tikzstyle{brownvertex}=[circle,draw=brown,fill=brown]
    \tikzstyle{redvertex}=[circle,draw=red,fill=red]

   \node [blackvertex,scale=0.5] (a1) at (60:2) {};
    \node [blackvertex,scale=0.5] (a2) at (120:2) {};
    \node [blackvertex,scale=0.5] (a3) at (180:2) {};
    \node [blackvertex,scale=0.5] (a4) at (240:2) {};
    \node [blackvertex,scale=0.5] (a5) at (300:2) {};
    \node [blackvertex,scale=0.5] (a6) at (0:2) {};

    \node [scale=0.7] at (60:2.3) {0};
    \node [scale=0.7] at (120:2.3) {5};
    \node [scale=0.7] at (180:2.3) {4};
    \node [scale=0.7] at (240:2.3) {3};
    \node [scale=0.7] at (300:2.3) {2};
    \node [scale=0.7] at (0:2.3) {1};

    \draw [black,thick] (a1) -- (a2) -- (a3) -- (a4) -- (a5) -- (a6) -- (a1);
    \draw [black,thick] (a1)-- (a3) -- (a5) -- (a1);
    \draw [black,thick] (a2)-- (a4) -- (a6)-- (a2) ;
\end{tikzpicture}
\caption{$\Cay(\mathbb{Z}_6, S_3)$}
\label{figure3}
\end{center}
\end{figure*}

\begin{example}
{\em Let  $S_3=\{1,2,4,5\}$ and $G_3=\Cay(\mathbb{Z}_6, S_3)$ (as shown in Figure \ref{figure3}). In this case,
$$
N=\{(2,0), (3,1),(4,0),(4,2),(5,1),(5,3)\}.
$$
By a simple calculation, we have
$$\sum\limits_{s\in S_3}(\omega_n^{xs}-\omega_n^{ys})=2\cos\left(\frac{\pi}{3}x\right)-2\cos\left(\frac{\pi}{3}y\right)+2\cos\left(\frac{2\pi}{3}x\right)-2\cos\left(\frac{2\pi}{3}y\right).$$
For  $(x,y)\in N$, the corresponding values of $\sum\limits_{s\in S_3}(\omega_n^{xs}-\omega_n^{ys})$ are $-6,0,-6,0,0,0$, respectively. Therefore, $G_3$ has FR at time $t=\frac{\pi}{3}$ between antipodal vertices by Corollary \ref{FRCAY-cor1}. Moreover, if $t<\frac{\pi}{3}$, then $G_3$ has no FR.
}
\end{example}

If $n_1=n_2=\cdots=n_r=2$ in Theorem \ref{FRCAY-mian result}, then $G=\Cay(\Gamma,S)$ is a cubelike graph. It is known \cite{CC} that the eigenvalues of cubelike graphs are $\lambda_x=\sum_{s\in S}(-1)^{s\cdot x}$, $x\in G$, where $s\cdot x=\sum_{i=1}^rs_ix_i$. Then we have the following result.
\begin{cor}\label{FRCAY-cor2}
Let $n=(2,2,\ldots,2)$. A cubelike graph $G=\Cay(\Gamma,S)$ has FR at time $t$ between $v$ and $v+a$ if and only if
\begin{equation*}
\frac{t}{2\pi}\sum_{s\in S}((-1)^{s\cdot x}-(-1)^{s\cdot y})\in \mathbb{Z}, \text{~for~all~} (x,y)\in N,
\end{equation*}
where $N=\left\{(x,y)\left| x,y\in G, x\dot>y, \mathrm{wt}\left(\frac{2a(x-y)}{n}\right) \text{~is~even}\right.\right\}$ and $a$ is of order $2$.
Moveover, if these conditions hold, then there is a  minimum time $t=\frac{2\pi}{M}$ at which FR occurs, where $M=\gcd\{\sum_{s\in S}((-1)^{s\cdot x}-(-1)^{s\cdot y}) | (x,y)\in N\}$.
\end{cor}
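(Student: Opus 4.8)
The plan is to obtain Corollary~\ref{FRCAY-cor2} as a direct specialization of Theorem~\ref{FRCAY-mian result} to the case $n_1 = n_2 = \cdots = n_r = 2$, so that $\Gamma$ is an elementary abelian $2$-group and $\Cay(\Gamma,S)$ is cubelike. The first step is to observe that the structural conditions (a) and (b) become automatic in this setting: in $\mathbb{Z}_2 \oplus \cdots \oplus \mathbb{Z}_2$ every nonzero element has order exactly two, so any admissible shift $a \ne 0$ satisfies (a); and since each $n_s = 2$ is even, condition (b) holds vacuously. Thus FR between $v$ and $v+a$ is governed solely by condition (c).

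The second step is to rewrite the eigenvalues in cubelike form. Specializing the character formula~(\ref{character}) with $\omega_{n_s} = \omega_2 = -1$ gives
\[
\lambda_x = \sum_{g\in S}\prod_{s=1}^r(-1)^{x_s g_s} = \sum_{g\in S}(-1)^{\sum_{s=1}^r x_s g_s} = \sum_{g\in S}(-1)^{g\cdot x},
\]
which is precisely the known spectrum of a cubelike graph. Substituting this into condition (c) of Theorem~\ref{FRCAY-mian result} yields $\frac{t}{2\pi}\sum_{s\in S}\bigl((-1)^{s\cdot x}-(-1)^{s\cdot y}\bigr)\in\mathbb{Z}$ for all $(x,y)\in N$; note that the index set $N$ depends only on $a$ and $n$, and so is already in the required form once $n=(2,2,\ldots,2)$.

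Finally, for the statement about the minimum time, the key observation is that each $\lambda_x=\sum_{g\in S}(-1)^{g\cdot x}$ is a sum of $\pm 1$'s and is therefore automatically an integer, so every difference $\lambda_x-\lambda_y$ is an integer with no additional hypothesis needed. Hence the second half of Theorem~\ref{FRCAY-mian result} applies unconditionally and delivers the minimum time $t=\frac{2\pi}{M}$ with $M=\gcd\{\sum_{s\in S}((-1)^{s\cdot x}-(-1)^{s\cdot y}):(x,y)\in N\}$. There is no genuine obstacle in this argument: it is entirely a matter of tracking how the general criterion collapses when all cyclic factors equal $\mathbb{Z}_2$. The only points that merit a moment's care are verifying the identity $\omega_2^{x_s g_s}=(-1)^{x_s g_s}$ and checking that the pairing $s\cdot x=\sum_{i=1}^r s_i x_i$ used in the cubelike setting matches the exponent appearing in~(\ref{character}), the ambiguity of reducing that exponent modulo $2$ being immaterial.
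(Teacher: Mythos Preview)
Your proposal is correct and follows exactly the paper's approach: the paper derives Corollary~\ref{FRCAY-cor2} simply by noting that when $n_1=\cdots=n_r=2$ the graph is cubelike with eigenvalues $\lambda_x=\sum_{s\in S}(-1)^{s\cdot x}$, and then invoking Theorem~\ref{FRCAY-mian result}. You have supplied the routine details (automaticity of (a) and (b), integrality of the $\lambda_x$) that the paper leaves implicit.
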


\begin{figure}[h]
\begin{center}
\begin{minipage}{0.1\textwidth}
	\begin{tikzpicture}[scale=0.7,auto,swap]
    \tikzstyle{blackvertex}=[circle,draw=black,fill=black]
    \tikzstyle{bluevertex}=[circle,draw=blue,fill=blue]
    \tikzstyle{greenvertex}=[circle,draw=green,fill=green]
    \tikzstyle{yellowvertex}=[circle,draw=yellow,fill=yellow]
    \tikzstyle{brownvertex}=[circle,draw=brown,fill=brown]
    \tikzstyle{redvertex}=[circle,draw=red,fill=red]

   \node [blackvertex,scale=0.5] (a1) at (0:2) {};
    \node [blackvertex,scale=0.5] (a2) at (90:2) {};
    \node [blackvertex,scale=0.5] (a3) at (180:2) {};
    \node [blackvertex,scale=0.5] (a4) at (270:2) {};

    \node [scale=0.7] at (0:2.75) {(0,0,0)};
    \node [scale=0.7] at (90:2.35) {(0,1,0)};
    \node [scale=0.7] at (180:2.75) {(1,0,0)};
    \node [scale=0.7] at (270:2.4) {(1,1,0)};

    \draw [black,thick] (a1) -- (a2) -- (a3) -- (a4) -- (a1) ;
\end{tikzpicture}
\end{minipage}%
\hspace{33mm}
\begin{minipage}{0.3\textwidth}
	\begin{tikzpicture}[scale=0.7,auto,swap]
    \tikzstyle{blackvertex}=[circle,draw=black,fill=black]
    \tikzstyle{bluevertex}=[circle,draw=blue,fill=blue]
    \tikzstyle{greenvertex}=[circle,draw=green,fill=green]
    \tikzstyle{yellowvertex}=[circle,draw=yellow,fill=yellow]
    \tikzstyle{brownvertex}=[circle,draw=brown,fill=brown]
    \tikzstyle{redvertex}=[circle,draw=red,fill=red]

   \node [blackvertex,scale=0.5] (a1) at (0:2) {};
    \node [blackvertex,scale=0.5] (a2) at (90:2) {};
    \node [blackvertex,scale=0.5] (a3) at (180:2) {};
    \node [blackvertex,scale=0.5] (a4) at (270:2) {};

    \node [scale=0.7] at (0:2.8) {(0,0,1)};
    \node [scale=0.7] at (90:2.4) {(0,1,1)};
    \node [scale=0.7] at (180:2.8) {(1,0,1)};
    \node [scale=0.7] at (270:2.4) {(1,1,1)};

    \draw [black,thick] (a1) -- (a2) -- (a3) -- (a4) -- (a1) ;
\end{tikzpicture}
\end{minipage}
\caption{$\Cay(\mathbb{Z}_2\oplus \mathbb{Z}_2\oplus\mathbb{Z}_2, S_4)$}
\label{figure4}
\end{center}
\end{figure}

\begin{example}
{\em Let $\Gamma=\mathbb{Z}_2\oplus \mathbb{Z}_2\oplus\mathbb{Z}_2$ and $S_4=\{(0,1,0), (1,1,0)\}$. Then $G_4=\Cay(\Gamma, S_4)$ is a disjoint union of two cycles of order $4$ (as shown in Figure \ref{figure4}). Let $a=(1,1,1)$.
It is easy to verify that
\begin{align*}
N=&\{((0,1,1),(0,0,0)),((1,1,0),(0,0,0)),((1,0,1),(0,0,0)),((0,1,0),(0,0,1)),\\
&((1,0,0),(0,1,0)),((1,1,1),(0,1,0)),((1,0,0),(0,0,1)),((1,1,1),(0,0,1)),\\
&((1,1,0),(0,1,1)),((1,0,1),(0,1,1)),((1,1,1),(1,0,0)),((1,1,0),(1,0,1))\}.
\end{align*}
Note that $x=(x_1,x_2,x_3)$ and $y=(y_1,y_2,y_3)$.  Then
$$\sum_{s\in S}((-1)^{s\cdot x}-(-1)^{s\cdot y})=(-1)^{x_2}-(-1)^{y_2}+(-1)^{x_1+x_2}-(-1)^{y_1+y_2}.$$
For  $(x,y)\in N$, the corresponding values of $\sum_{s\in S}((-1)^{s\cdot x}-(-1)^{s\cdot y})$ are $-4,-2,-2,-4,2,2$, $-2,-2,2,2,0,0$, respectively. Therefore, $G_4$ has FR at time $t=\pi$ between vertices $v$ and $v+(1,1,1)$ by Corollary \ref{FRCAY-cor2}. Moreover, if $t<\pi$, then $G_4$ has no FR.
}
\end{example}

Let $B_r$ be the ring of Boolean functions with $r$ variables. Let $r=2m+1~(m\geq2)$, and $f$ be a bent function (see \cite[Defintion 4.4]{Tan19}) in $B_{r-1}$. Denote $$
S_1=\{z_1\in \mathbb{Z}_2^{r-1}:f(z_1)=1\} \text{~with~}0\notin S_1,~ S^{\varepsilon}=(\varepsilon, S_1)~(\varepsilon=0,1) \text{~and~}S=S^0\cup S^1.
$$
The Cayley graph $G=\Cay(\mathbb{Z}_2^r,S)$ was defined in \cite{Tan19}, where the perfect state transfer of $G=\Cay(\mathbb{Z}_2^r,S)$ was considered. Note that $G=\Cay(\mathbb{Z}_2^r,S)$ is a cubelike graph. Next we prove $G=\Cay(\mathbb{Z}_2^r,S)$ has FR by Theorem \ref{FRCAY-mian result}.
\begin{cor}
Let $G=\Cay(\mathbb{Z}_2^r,S)$ be defined as above. Then $G$ has FR at time $t=\frac{\pi}{2^m}$ between vertices $v$ and $v+(1,0)$, $0\in \mathbb{Z}_2^{r-1}$.
\end{cor}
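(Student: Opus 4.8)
The plan is to invoke Corollary~\ref{FRCAY-cor2}, since $G=\Cay(\mathbb{Z}_2^r,S)$ is a cubelike graph. Put $a=(1,0):=(1,0,\ldots,0)\in\mathbb{Z}_2^r$; every nonzero element of $\mathbb{Z}_2^r$ has order two, so $a$ is of order two, and conditions~(a),~(b) of Theorem~\ref{FRCAY-mian result} hold automatically. With $n=(2,\ldots,2)$ and $a=(1,0,\ldots,0)$ one has $\mathrm{wt}\!\left(\frac{2a(x-y)}{n}\right)=(x-y)_1\in\{0,1\}$, which is even exactly when $x_1=y_1$. Hence $N=\{(x,y)\mid x,y\in\mathbb{Z}_2^r,\ x\dot>y,\ x_1=y_1\}$, and by Corollary~\ref{FRCAY-cor2} it suffices to show that $2^{m+1}$ divides $\lambda_x-\lambda_y$ for every $(x,y)\in N$, where $\lambda_x=\sum_{s\in S}(-1)^{s\cdot x}$; indeed then $\tfrac{t}{2\pi}(\lambda_x-\lambda_y)=(\lambda_x-\lambda_y)/2^{m+1}\in\mathbb{Z}$ with $t=\pi/2^m$.

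First I would compute the eigenvalues explicitly. Writing $x=(x_1,x')$ with $x_1\in\mathbb{Z}_2$ and $x'\in\mathbb{Z}_2^{r-1}$, and each $s\in S$ as $(\eta,z_1)$ with $\eta\in\{0,1\}$, $z_1\in S_1$, we get
$$\lambda_x=\sum_{\eta\in\{0,1\}}\sum_{z_1\in S_1}(-1)^{\eta x_1+z_1\cdot x'}=\bigl(1+(-1)^{x_1}\bigr)\sum_{z_1\in S_1}(-1)^{z_1\cdot x'}.$$
Using $\mathbf{1}[f(z_1)=1]=\tfrac12\bigl(1-(-1)^{f(z_1)}\bigr)$ together with the Walsh--Hadamard transform $W_f(x')=\sum_{z_1\in\mathbb{Z}_2^{2m}}(-1)^{f(z_1)+z_1\cdot x'}$ and the orthogonality relation $\sum_{z_1}(-1)^{z_1\cdot x'}=2^{2m}$ if $x'=0$ and $0$ otherwise, the inner sum equals $2^{2m-1}[x'=0]-\tfrac12 W_f(x')$. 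Since $f$ is bent on $\mathbb{Z}_2^{r-1}=\mathbb{Z}_2^{2m}$, $W_f(x')\in\{2^m,-2^m\}$ for all $x'$. Therefore $\lambda_x=0$ if $x_1=1$; $\lambda_0=2^{2m}-W_f(0)$; and $\lambda_x=-W_f(x')$ if $x_1=0$ and $x'\neq0$.

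It then remains to check the divisibility case by case for $(x,y)\in N$, where $x_1=y_1$ and $x\neq y$. If $x_1=y_1=1$ then $\lambda_x-\lambda_y=0$. If $x_1=y_1=0$, then $x\neq0$ (otherwise $x\dot>y$ would fail), so either both $x',y'$ are nonzero, in which case $\lambda_x-\lambda_y=W_f(y')-W_f(x')\in\{0,\pm2^{m+1}\}$, or $y=0$, in which case $\lambda_x-\lambda_y=\bigl(W_f(0)-W_f(x')\bigr)-2^{2m}$, which is divisible by $2^{m+1}$ since $W_f(0)-W_f(x')\in\{0,\pm2^{m+1}\}$ and $m\ge2$ gives $2^{m+1}\mid 2^{2m}$. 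In every case $2^{m+1}\mid\lambda_x-\lambda_y$, so Corollary~\ref{FRCAY-cor2} yields FR at time $t=\pi/2^m$ between $v$ and $v+(1,0)$. The only step needing care is the eigenvalue computation: recognizing $\lambda_x$ as the product of a first-coordinate factor with a Walsh value of the bent function, and isolating the exceptional vertex $x'=0$; once that is done the divisibility is immediate.
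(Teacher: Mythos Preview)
Your proof is correct and follows essentially the same route as the paper: identify $N$ as the pairs with equal first coordinate, compute the eigenvalues in terms of the Walsh transform $W_f$ of the bent function, and check that every $\lambda_x-\lambda_y$ with $(x,y)\in N$ is divisible by $2^{m+1}$. The only difference is cosmetic: the paper quotes the eigenvalue formula from \cite{Tan19}, whereas you derive it directly via the factorisation $\lambda_x=\bigl(1+(-1)^{x_1}\bigr)\sum_{z_1\in S_1}(-1)^{z_1\cdot x'}$ and the Walsh--Hadamard identity; your case analysis for $y=0$ is in fact slightly more explicit than the paper's.
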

\begin{proof}
It is easy to verify that
$$N=\{(x,y)|\left((0,x_1), (0,y_1)\right), \left((1,x_1), (1,y_1)\right), x_1\dot>y_1, x_1,y_1\in \mathbb{Z}_2^{r-1}\}.$$
The eigenvalues of $G$ are given as follows  \cite{Tan19}:
\begin{align*}
\lambda_x=\left\{
\begin{array}{ccc}
2^{r-1}-W_f(x_1), &\text{~~~~~if~} x=(0,x_1) \text{~and~}x_1=0\in \mathbb{Z}_2^{r-1},\\[0.2cm]
-W_f(x_1),        &\text{~~~~~if~} x=(0,x_1) \text{~and~}x_1\neq0\in \mathbb{Z}_2^{r-1},\\[0.2cm]
0,             &\text{if~} x=(1,x_1) \text{~and~}x_1\in \mathbb{Z}_2^{r-1}.
\end{array}
\right.
\end{align*}
where $W_f(x_1)=2^m$ or $-2^m$. Then
$$\lambda_x-\lambda_y\in \{0,-2^{r-1}, \pm2^{m+1}\}, ~ \forall(x,y)\in N.$$
Since $r-1=2m>m$, $M=\gcd\{(\lambda_x-\lambda_y)_{(x,y)\in N}\}=2^{m+1}$. By Theorem \ref{FRCAY-mian result}, $G$ has FR  at time $t=\frac{\pi}{2^m}$ between vertices $v$ and $v+(1,0)$, $0\in \mathbb{Z}_2^{r-1}$.
\end{proof}

\section*{Acknowledgements}
The authors greatly appreciate the anonymous referees for their comments and suggestions.

\section*{Statements and Declarations}

\noindent \textbf{Competing interests}~~No potential competing interest was reported by the authors.

\medskip

\noindent \textbf{Data availability statements}~~Data sharing not applicable to this article as no datasets were generated or analysed during the current study.

\end{document}